\title{Reverse mathematics and colorings of hypergraphs}
\author{C. Davis, J. Hirst\footnote{Corresponding author:  Jeffry Hirst, Department of Mathematical Sciences, Appalachian State University, Boone, NC 28608  hirstjl@math.appstate.edu  ORCID:0000-0002-8273-8951\newline
This is a pre-print of an article published in the Archive for Mathematical Logic. The final authenticated version is available online at: https://doi.org/10.1007/s00153-018-0654
},
J. Pardo, and T. Ransom
}
\date{April 5, 2018}
\setlist[enumerate]{label=\rm{(\arabic*)}, ref=\arabic*}
\theoremstyle{plain}
\newtheorem{thm}{Theorem}
\newtheorem{lemma}[thm]{Lemma}
\theoremstyle{definition}
\newtheorem*{defn}{Definition}      
\theoremstyle{definition}
\newcommand{\nat}{\mathbb N}  
\newcommand{\hypg}[2]{\langle #1,#2 \rangle}
\newcommand{\rca}{{\sf RCA}_0}
\newcommand{\wkl}{{\sf WKL}_0}
\newcommand{\aca}{{\sf ACA}_0}
\newcommand{\poo}{{\Pi^1_1 \text{-}}{\sf CA}_0}
\newcommand{\lh}{{\text {length}}}
\newcommand{\srt}{{\sf {SRT}}^2_2}
\newcommand{\seq}[1]{\langle #1_i \rangle_{i \in \nat}}
\newcommand{\cat}{{}^\smallfrown}
\newcommand{\mred}{{\text{red}}}
\newcommand{\mblue}{{\text{blue}}}
\begin{document}

\maketitle

\begin{abstract}
Working in subsystems of second order arithmetic, we formulate several representations
for hypergraphs.  We then prove the equivalence of various vertex coloring theorems to
$\wkl$, $\aca$, and $\poo$.  MSC: 03B30; 03F35
\end{abstract}

A hypergraph consists of a set of vertices together with a set of edges.  While edges in graphs always
have exactly two vertices, hypergraphs may have edges of any cardinality, finite or infinite.  Some authors
exclude edges that are empty of have a single vertex, but for this article, edges may be of any size.
After considering alternatives for the representations of edges, we will turn to results on the strength of
theorems related to three types of vertex colorings.

\subsection*{Representations of edges}

The edges of hypergraphs can be presented in a variety of ways.  If all the edges are finite,
each edge can be encoded by a single number.  In this case, the edges can be represented
by a set of codes or by a sequence of codes.  Finite edges and infinite edges can both be
represented by a sequence of characteristic functions.  Some changes in representation can be
carried out in $\rca$, while others require additional set comprehension, as shown by the next
four results.

\begin{thm}\label{rep1}
$(\rca )$ If $H$ is a hypergraph with finite edges represented by a set of edges, then there is a hypergraph $H'$ with exactly the same edges represented by a sequence of its edges, possibly with repetitions.	
\end{thm}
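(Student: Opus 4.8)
The plan is to keep the vertices of $H$ fixed and to replace the \emph{set} $E$ of codes for the finite edges by a \emph{sequence} of codes having exactly the same terms. Here $H = \hypg{V}{E}$, where each element of $E$ is a number coding a finite set of vertices. The only real idea needed is the familiar device for realizing a nonempty set as the range of a total function inside $\rca$: enumerate the ambient numbers $0,1,2,\dots$, keep those that happen to lie in $E$, and pad the remaining positions with a single fixed edge, which is exactly where the permitted repetitions come from.

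Concretely, assuming $E$ has at least one element, I would fix a code $a \in E$ to use as a parameter and then define a function $f$ by setting $f(i) = i$ when $i \in E$ and $f(i) = a$ when $i \notin E$. Because ``$i \in E$'' is a bounded ($\Sigma^0_0$) condition in the set parameter $E$, the graph of $f$ is $\Delta^0_1$ with parameters $E$ and $a$, so $f$ exists as a total function by the recursive comprehension of $\rca$. Taking $H' = \hypg{V}{\seq{e}}$ with $e_i = f(i)$ produces a hypergraph represented by a sequence of edges. Verifying that $H'$ has exactly the edges of $H$ is then routine: every term $f(i)$ is either $i \in E$ or $a \in E$, so no spurious edges appear, while each $e \in E$ occurs as the term $f(e) = e$; hence the term-set of the sequence is precisely $E$.

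I expect the one point requiring care to be the edgeless case $E = \emptyset$, since an infinite sequence of edges cannot exist when there are no edges to list. This is not an obstacle to the construction itself, because $\rca$ reasons classically and I may argue by cases on whether $E$ is inhabited: in the inhabited case the repetition trick above applies verbatim, and the empty case is disposed of by the representational convention for the edgeless hypergraph. Note that no decision procedure for emptiness is needed, and in particular no comprehension beyond $\Delta^0_1$ is invoked, which is what keeps the whole argument inside $\rca$.
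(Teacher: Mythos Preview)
Your argument is correct and essentially identical to the paper's: both fix a default edge $a\in E$ and define $f(i)=i$ if $i\in E$ and $f(i)=a$ otherwise, so that the range of $f$ is exactly $E$. You are in fact more careful than the paper, since you justify why $f$ exists via $\Delta^0_1$ comprehension and address the empty-edge case explicitly.
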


\begin{proof}
We argue in $\rca$. Suppose $H=\left<V,E\right>$ is a hypergraph with finite edges, where $E$ is a set of integer codes for the edges of $H$. Let $e_0$ be an integer code for an edge of $H$. Define the function $f:\mathbb{N}\rightarrow\mathbb{N}$ by
$$f(n) =  \begin{cases} n & n\in E \\ e_0 & \text{otherwise} \end{cases}$$
Then the range of $f$ is $E$, and $V$ together with $f$ represents $H$ using a sequence of codes.
\end{proof}

\begin{thm}\label{rep2}
$(\rca)$ If $H$ is a hypergraph with finite edges represented by a sequence of its edges, then there is a hypergraph $H'$ with exactly the same edges represented by a sequence of characteristic functions for its edges.
\end{thm}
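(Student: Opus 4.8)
The plan is to argue in $\rca$ and convert the given sequence of edge codes directly into a sequence of characteristic functions by decoding each code. Suppose $H = \hypg{V}{E}$, where $E = \seq{e}$ is a sequence of integer codes for the finite edges of $H$. Recall that a characteristic function for an edge is the $\{0,1\}$-valued function on $V$ taking the value $1$ exactly on the vertices of that edge, and that a sequence of such functions is coded by a single two-place function. Since each $e_i$ codes a finite subset of $V$, the assertion that a vertex $v$ belongs to the edge coded by $e_i$ is a bounded formula in $i$, $v$, and $E$.

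First I would define the array of characteristic functions. Set $g(i,v) = 1$ if $v$ is an element of the finite set coded by $e_i$, and $g(i,v) = 0$ otherwise. Because decoding a finite-set code and testing membership are primitive recursive operations, this defining condition is $\Delta^0_1$, so $\rca$ proves the existence of $g$ by recursive comprehension. Reading $g$ as a sequence $\seq{\chi}$ of characteristic functions via $\chi_i(v) = g(i,v)$ then yields the required sequence.

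Then I would verify that $H' = \hypg{V}{\seq{\chi}}$ has exactly the same edges as $H$. For each $i$ the set $\{\, v : \chi_i(v) = 1 \,\}$ is precisely the edge coded by $e_i$, so the $i$-th edge of $H'$ equals the $i$-th edge of $H$, and in particular every edge of $H$ appears among those of $H'$ and vice versa.

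I do not expect a serious obstacle here. The one point to confirm carefully is that the membership test on finite codes is genuinely a decidable ($\Delta^0_0$) predicate, so that the comprehension producing $g$ remains recursive comprehension and no arithmetic comprehension is invoked. As with Theorem~\ref{rep1}, the entire construction then proceeds within $\rca$ and requires no additional set existence axioms.
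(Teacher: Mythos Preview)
Your proposal is correct and follows essentially the same approach as the paper: define $\chi_i(v)=1$ iff $v$ belongs to the finite set coded by $e_i$, and invoke recursive comprehension to obtain the sequence $\seq{\chi}$. If anything, you are slightly more explicit than the paper in noting that the membership test on finite-set codes is $\Delta^0_0$, which is exactly the point that makes the appeal to recursive comprehension legitimate.
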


\begin{proof}
We argue in $\rca$. Suppose $H=\left<V,E\right>$ is a hypergraph with finite edges, where $E$ is the sequences for the edges of $H$. Define the characteristic function $\chi_i$ by
$$\chi_i(v_j) =  \begin{cases} 1 & v_j\in \left<e_i\right> \\ 0 & \text{otherwise} \end{cases}$$
By recursive comprehension, the sequence of characteristic function $\left<\chi_i\right>_{i\in\mathbb{N}}$ exists. The vertices $V$ together with the sequence of characteristic functions for each edge represent $H$.
\end{proof}

\begin{thm}\label{rep3}
$(\rca )$  The following are equivalent:
\begin{enumerate}
\item $\aca$. \label{rep31}
\item If $H$ is a hypergraph with finite edges represented by a sequence of edges, then $H$ can be represented by a set of edges.\label{rep32}
\end{enumerate}
\end{thm}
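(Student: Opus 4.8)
The plan is to prove both implications over $\rca$, using the standard characterization that $\aca$ is equivalent over $\rca$ to the \emph{range principle}: the range of every injection $f:\nat\to\nat$ exists as a set. The implication (\ref{rep31})$\,\Rightarrow\,$(\ref{rep32}) is then a one-line appeal to arithmetic comprehension, while (\ref{rep32})$\,\Rightarrow\,$(\ref{rep31}) codes an arbitrary injection as a hypergraph of singleton edges and recovers its range from the promised set representation.

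For (\ref{rep31})$\,\Rightarrow\,$(\ref{rep32}), assume $\aca$ and suppose $H=\hypg{V}{E}$ is presented by a sequence of edges $\seq{e}$, where each $e_i$ is an integer code for a finite edge. The set of edges I want is exactly the range $\{c : \exists i\,(e_i=c)\}$ of this sequence, which is a $\Sigma^0_1$ condition in the given data. Arithmetic comprehension produces the corresponding set $E'$ of codes, and $\hypg{V}{E'}$ has precisely the same edges as $H$.

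For the converse I would first reduce $\aca$ to the range principle, then fix an injection $f:\nat\to\nat$ and build the hypergraph on vertex set $\nat$ whose edges are the singletons $\{f(i)\}$. Concretely, letting $c_n$ denote the (primitive recursive) canonical code for $\{n\}$, the function $i\mapsto c_{f(i)}$ is a sequence of edges, and $\rca$ forms it since $f$ is given and $n\mapsto c_n$ is primitive recursive. By hypothesis (\ref{rep32}) there is a set $E$ of codes representing the same hypergraph, so for each $n$ the singleton $\{n\}$ is an edge of $H$ exactly when $c_n\in E$. Hence $n\in\operatorname{range}(f)$ if and only if $c_n\in E$, a $\Delta^0_1$ condition relative to $E$, and recursive comprehension relative to $E$ yields $\operatorname{range}(f)$. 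The range principle then gives $\aca$.

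The delicate point, and the step I would verify most carefully, is the coding bookkeeping linking membership of singleton codes in $E$ with membership in the range. One must confirm that $n\mapsto c_n$ is genuinely $\rca$-definable (so both the sequence of edges and the final comprehension are legitimate) and that ``same edges'' for the two representations forces the equivalence $c_n\in E \Leftrightarrow n\in\operatorname{range}(f)$ at the level of canonical codes, rather than merely at the level of extensions. Granting these routine checks, both implications go through as sketched.
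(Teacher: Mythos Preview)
Your proof is correct and follows essentially the same approach as the paper's: the forward direction is identical, and for the reversal both you and the paper reduce to the range principle and encode an injection as a sequence of edges, the only difference being that the paper uses two-element edges $\{0,g(n)+1\}$ rather than your singletons $\{f(n)\}$. This is an inessential variation (the paper explicitly allows singleton edges), and your flagged concern about canonical codes is exactly the routine bookkeeping both arguments rely on.
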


\begin{proof}
First we will prove that (\ref{rep31}) implies (\ref{rep32}). Reasoning in $\aca$, let $H$ be a hypergraph with finite edges represented by the sequence $\left<e_i\right>_{i\in\mathbb{N}}$. Arithmetical comprehension can prove the edge set $\{e\mid\exists i(e=e_i)\}$ exists.
	
To prove the converse, by Lemma III.1.3 of Simpson \cite{simpson}, it suffices to use item (\ref{rep32}) to prove the existence of the range of an injection. Let $g:\mathbb{N}\rightarrow\mathbb{N}$ be an injection. Define the hypergraph $H=\left<V,E\right>$ as follows. Let $\mathbb{N}$ be the set of vertices. Define a sequence of edges $\left<e_n\right>_{n\in\mathbb{N}}$ by $e_n=\{0,g(n)+1\}$.
Note that $m$ is in the range of $g$ if and only if the set $\{0,m+1\}$ is an edge of $H$. Given the set of edges of $H$, recursive comprehension proves the existence of the range of $g$.	
\end{proof}

\begin{thm}\label{rep4}
$(\rca )$ The following are equivalent:
\begin{enumerate}
\item $\aca$. \label{rep41}
\item If $H$ is a hypergraph with finite edges represented by a sequence of characteristic functions, then $H$ can be represented by a sequence of finite set codes for edges.\label{rep42}
\end{enumerate}
\end{thm}

\begin{proof}
We begin by proving that \ref{rep41} implies \ref{rep42}. Reasoning in $\aca$, let $H$ be a hypergraph with finite edges represented by the sequence of characteristic functions $\left<e_i\right>_{i\in\mathbb{N}}$. Define $s_i=\{j\mid e_i(j)=1\}$ for $i\in\mathbb{N}$. Then the sequence $\left<s_i\right>_{i\in\mathbb{N}}$ is arithmetically definable.
	
To prove the converse, it suffices to use statement \ref{rep42} to prove the existence of a range of an injection. Let $g:\mathbb{N}\rightarrow\mathbb{N}$ be that injection. Let $H$ be the hypergraph with vertex set $\mathbb{N}$ and edges defined by the sequence of characteristic functions $\left<e_i\right>_{i\in\mathbb{N}}$ defined as follows	
$$e_i(n) = \begin{cases} 1 & n=2i \vee n=2i+2 \\ 0 & n=2j\wedge j\not\in \{i,i+1\} \\ 1 & n=2j+1\wedge g(j)=i \\ 0 & n=2j+1\wedge g(j)\neq i \end{cases}$$
The recursive comprehension axiom proves the existence of $\left<e_i\right>$ using $g$ as a parameter. Applying the principle of statement 2, let $\left<s_i\right>_{i\in\mathbb{N}}$ be a sequence of finite set codes for the edges of $H$. Given any value $y$, successively examine the set codes until we locate the unique $s_i$ encoding a set containing $2y$ and $2y+2$. If this set contains only $2y$ and $2y+2$, then $y$ is not in the range of $g$, but if the set also contains another element, then $y$ is in the range of $g$. Thus, recursive comprehension proves the existence of the range of $g$. 
\end{proof}

In order to state compactness results for vertex colorings, we need to consider finite substructures
of hypergraphs.
The following terminology is based on that of Berge \cite{berge}.  When constructing substructures
of a hypergraph $H$, one can either require that all edges in the substructure are edges of $H$, or allow
edges of the substructure to be subsets of edges of $H$.  

\begin{defn}
A {\sl partial hypergraph} of a hypergraph $H$ consists of a subset $V$ of the vertices
of $H$ and a subset $E$ of the edges of $H$ such that for every edge $e \in E$, $e \subset V$.
\end{defn}

\begin{defn}
A {\sl partial subhypergraph} of a hypergraph $H$ consists of a subset $V$ of the vertices of $H$ and a set $E$ of
subsets of $V$ such that for every $e\in E$, there is an edge $e^\prime \in H$ such that $e = e^\prime \cap V$.
\end{defn}

\subsection*{Vertex colorings and finite edges}

A vertex coloring is a map from the vertices of a hypergraph into a set of colors, usually coded
by a subset of $\nat$.
In the literature, most definitions for vertex colorings are stated for finite hypergraphs,
but those listed below extend naturally to
infinite hypergraphs.  The definition for
conflict-free colorings is based on that of Smorodinsky \cite{smo}.  The other definitions can be found in the book of Berge \cite{berge}.

\begin{defn}  Suppose $\hypg{V}{E}$ is a hypergraph.  Let $\chi : V \to \nat$ be a coloring of the vertices.  We say:
\begin{enumerate}
\item  $\chi$ is a $k$-coloring if the range of $\chi$ is contained in $\{0, 1, \dots k-1\}$.
\item  $\chi$ is {\sl proper} if $\chi$ is non-constant on each edge that contains more than one vertex.
\item  $\chi$ is {\sl strong} if $\chi$ is injective on each edge.
\item $\chi$ is {\sl conflict-free} \cite{smo} if each edge contains one vertex whose color matches no other vertex in the edge.
That is, $\forall e \exists j ~ | \{ v \in e \mid \chi (v) = j \}| = 1$.
\end{enumerate}
\end{defn}

Theorem 3.4 of Hirst \cite{hirstmt} states that $\wkl$ is equivalent to the statement that every
locally $k$-colorable graph has a $k$-coloring.  The following result generalizations this theorem
to the hypergraph setting, for hypergraphs with sequences or sets of finite edges.

\begin{thm}\label{pcffA}
$(\rca)$  For $k \ge 2$, the following are equivalent:
\begin{enumerate}
\item $\wkl$.\label{pcffA1}
\item Let $H$ be a hypergraph with a sequence of finite edges.  If every finite
partial hypergraph of $H$ has a proper $k$-coloring, then $H$ has a proper $k$-coloring.\label{pcffA2}
\item Statement {\rm(\ref{pcffA2})} with ``proper'' replaced by ``conflict-free''.\label{pcffA3}
\item Statement {\rm(\ref{pcffA2})} with ``sequence of edges'' replaced by ``set of edges.''\label{pcffA4}
\item Statement {\rm(\ref{pcffA3})} with ``sequence of edges'' replaced by ``set of edges.''\label{pcffA5}
\end{enumerate}
\end{thm}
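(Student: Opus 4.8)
The plan is to prove the theorem in two halves: that $\wkl$ proves each coloring statement by a bounded König's lemma (compactness) argument, and that each coloring statement reverses to $\wkl$ by specializing to ordinary graphs and invoking Theorem 3.4 of Hirst \cite{hirstmt}. For $(\ref{pcffA1})\Rightarrow(\ref{pcffA2})$ I would enumerate the vertices as $v_0,v_1,\dots$ and build a tree $T\subseteq\{0,\dots,k-1\}^{<\nat}$ whose length-$\ell$ nodes are exactly the strings $\sigma$ whose induced coloring $\chi_\sigma(v_m)=\sigma(m)$ is non-constant on every edge $e_i$ with $i<\ell$ and $e_i\subseteq\{v_0,\dots,v_{\ell-1}\}$. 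Since the index $i$ is bounded by $\ell$ and each edge is finite, membership in $T$ is a $\Sigma^0_0$ condition, so $T$ exists by recursive comprehension and is plainly closed under initial segments. For each $\ell$, the finite partial hypergraph $P_\ell$ with vertex set $\{v_0,\dots,v_{\ell-1}\}$ and edge set $\{e_i : i<\ell \text{ and } e_i\subseteq\{v_0,\dots,v_{\ell-1}\}\}$ has, by hypothesis, a proper $k$-coloring, which supplies a node of $T$ at length $\ell$; hence $T$ is infinite and bounded. Applying $\wkl$ in the form of bounded König's lemma (equivalent over $\rca$, by \cite{simpson}) yields an infinite path, and the associated coloring is proper because every edge $e_i$ is accounted for at all sufficiently large levels.

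The implication $(\ref{pcffA1})\Rightarrow(\ref{pcffA3})$ runs identically, replacing ``non-constant on $e_i$'' by the conflict-free clause $\exists j\,|\{v\in e_i : \chi_\sigma(v)=j\}|=1$; because $e_i$ is finite and $k$ is fixed, this is again a bounded, decidable condition, so the tree construction goes through unchanged. For the set-of-edges forms, Theorem \ref{rep1} converts any representation by a set of edges into a representation by a sequence of edges with the same edges, inside $\rca$. Since both the collection of finite partial hypergraphs and the properties of being a proper (respectively conflict-free) coloring depend only on the underlying edges, the implications $(\ref{pcffA2})\Rightarrow(\ref{pcffA4})$ and $(\ref{pcffA3})\Rightarrow(\ref{pcffA5})$ follow immediately.

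For the reversals I would specialize each statement to graphs, that is, hypergraphs in which every edge has exactly two vertices. On a two-element edge $\{u,v\}$ a coloring is non-constant precisely when some color occurs exactly once, so ``proper'' and ``conflict-free'' coincide there; moreover the hypothesis ``every finite partial hypergraph has a proper $k$-coloring'' is, for graphs, exactly the assertion that the graph is locally $k$-colorable, since a finite partial hypergraph is a finite subgraph with possibly fewer edges. Consequently each of $(\ref{pcffA2})$, $(\ref{pcffA3})$, $(\ref{pcffA4})$, $(\ref{pcffA5})$, read for graphs presented by a sequence or by a set of edges as the statement requires, yields the assertion that every locally $k$-colorable graph has a $k$-coloring, which is $\wkl$ by Theorem 3.4 of Hirst \cite{hirstmt}. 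This closes the loops $(\ref{pcffA1})\Rightarrow(\ref{pcffA2})\Rightarrow(\ref{pcffA4})\Rightarrow(\ref{pcffA1})$ and $(\ref{pcffA1})\Rightarrow(\ref{pcffA3})\Rightarrow(\ref{pcffA5})\Rightarrow(\ref{pcffA1})$, giving all five equivalences.

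I expect the main obstacle to be the forward direction inside $\rca$: the naive tree condition ``$\sigma$ properly colors every edge contained in its vertices'' carries an unbounded quantifier over edge indices and need not be decidable. The remedy is to bound the index by the current level $\ell$ and to build the witnessing finite partial hypergraphs $P_\ell$ from explicitly indexed edges, so that tree membership stays $\Sigma^0_0$ and each $P_\ell$ is genuinely finite. The care required is in verifying that this truncated tree is still closed under initial segments and that its infinite paths color \emph{all} edges correctly, not merely those of bounded index; both follow from the monotonicity that an edge $e_i\subseteq\{v_0,\dots,v_{\ell-1}\}$ with $i<\ell$ remains under consideration at every longer level.
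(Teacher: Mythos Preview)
Your proposal is correct and follows essentially the same approach as the paper: a bounded-tree compactness argument via $\wkl$ for the forward directions, Theorem~\ref{rep1} to pass from sequence-of-edges to set-of-edges, and specialization to ordinary graphs together with Theorem~3.4 of Hirst~\cite{hirstmt} for the reversals. The only difference is cosmetic---the paper introduces auxiliary bounds $m_n$ so that the first $n$ edges lie among the first $m_n$ vertices and checks the coloring at those levels, whereas you check at level~$\ell$ only the edges $e_i$ with $i<\ell$ and $e_i\subseteq\{v_0,\dots,v_{\ell-1}\}$; both devices keep tree membership $\Sigma^0_0$ and yield the same infinite paths.
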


\begin{proof}
To prove that (\ref{pcffA1}) implies (\ref{pcffA2}), assume $\wkl$ and let $H$ be a hypergraph with vertex set $\{v_1, v_2, \ldots\}$ such that every finite partial hypergraph of $H$ has a proper $k$-coloring.  For every $n$, let $m_n$ be the least integer such that $m_n > m_{n-1}$ and if a vertex $v_i$ appears in one of the first $n$ edges of $H$ then $i \le m_n$.  Let $H_n$ be the finite partial hypergraph with vertex set $\{v_1, v_2, \ldots, v_{m_n}\}$ and the first $n$ edges of $H$.  Let $T$ be the tree consisting of those finite sequences $\sigma$ in $k^{<\nat}$ such that whenever length$(\sigma)=m_n$, $\sigma$ is a proper $k$-coloring of $H_n$.  Since every finite partial hypergraph of $H$ has a proper $k$-coloring, $T$ must contain infinitely many sequences.  By $\wkl$, $T$ has an infinite path.  This path yields a proper $k$-coloring of $H$.

To prove that (\ref{pcffA1}) implies (\ref{pcffA3}), repeat the previous proof replacing every instance of the word ``proper'' by ``conflict-free.''  Theorem \ref{rep1} shows that (\ref{pcffA2}) implies (\ref{pcffA4}) and (\ref{pcffA3}) implies (\ref{pcffA5})
It remains to show that both (\ref{pcffA4}) and (\ref{pcffA5}) imply (\ref{pcffA1}).  For graphs, that is, for hypergraphs in which every edge
has exactly two vertices, every coloring is proper if and only if it is conflict-free.  Thus, both (\ref{pcffA4}) and (\ref{pcffA5}) imply that if every finite
subgraph of a graph $H$ is $k$-colorable, then $H$ can be $k$-colored.  This implies $\wkl$ by Theorem 3.4 of Hirst \cite{hirstmt}.
\end{proof}

For the analog of the previous theorem for 
graphs with edges represented by sequences of characteristic functions, additional set comprehension strength is required.

\begin{thm}\label{conj2}
$(\rca )$  For any $k\ge 2$, the following are equivalent.
\begin{enumerate}
\item \label{conj21} $\aca$.
\item \label{conj22} Suppose $H$ is a hypergraph with finite edges given by a sequence of characteristic functions.  If every finite
partial hypergraph of $H$ has a proper $k$-coloring then $H$ has a proper $k$-coloring.
\item \label{conj23} Statement {\rm(\ref{conj22})} with ``proper'' replaced by ``conflict-free.''
\end{enumerate}
\end{thm}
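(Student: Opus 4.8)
The plan is to get the forward implications from the representation and compactness results already proved, and to get the reversals by coding the range of an injection into a characteristic-function hypergraph. For $(\ref{conj21})\Rightarrow(\ref{conj22})$ and $(\ref{conj21})\Rightarrow(\ref{conj23})$, I would reason in $\aca$ and let $H$ be a hypergraph with finite edges presented by a sequence of characteristic functions. By Theorem \ref{rep4}, $\aca$ converts this into a presentation of a hypergraph with exactly the same edges by a sequence of finite set codes. A finite partial hypergraph is the same object under either presentation, so the coloring hypothesis transfers verbatim, and since $\aca$ proves $\wkl$, the implications $(\ref{pcffA1})\Rightarrow(\ref{pcffA2})$ and $(\ref{pcffA1})\Rightarrow(\ref{pcffA3})$ of Theorem \ref{pcffA} now yield the desired proper, respectively conflict-free, $k$-coloring of $H$.

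For the reversals I would invoke Lemma III.1.3 of Simpson \cite{simpson} and use either (\ref{conj22}) or (\ref{conj23}) to prove the existence of the range of an arbitrary injection $g:\nat\to\nat$. Working with $k=2$, the coding primitive is that ``$g(s)=m$'' is decidable, so for each $m$ I can build edges whose characteristic functions are recursive in $g$ that switch behavior exactly at the (unique, by injectivity) stage witnessing $m\in\text{range}(g)$. Concretely, along a chain of fresh vertices $x^m_0,x^m_1,\dots$ I would make the two-element edge $\{x^m_s,x^m_{s+1}\}$ present when $g(s)\ne m$, forcing $\chi(x^m_s)\ne\chi(x^m_{s+1})$, and deactivate it (collapsing its characteristic function to the single vertex $x^m_s$) when $g(s)=m$, switching on instead a length-two detour $x^m_s,y^m_s,x^m_{s+1}$ that forces $\chi(x^m_s)=\chi(x^m_{s+1})$. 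All edges here have size at most two, so a $2$-coloring is conflict-free exactly when it is proper; this is why the single hypergraph simultaneously drives both $(\ref{conj22})\Rightarrow(\ref{conj21})$ and $(\ref{conj23})\Rightarrow(\ref{conj21})$, and larger $k$ costs nothing since the extra $k-2$ colors can simply go unused.

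Before applying the principle I must check in $\rca$ that every finite partial hypergraph of $H$ has a proper $2$-coloring, and this is where characteristic functions do real work: a finite partial hypergraph mentions only finitely many chain edges, and any finite system of ``alternate'' and ``equal-adjacent'' constraints of this shape is plainly satisfiable, so the hypothesis holds and is provable without deciding the range. The main obstacle, and the step that genuinely needs $\aca$ rather than mere $\wkl$, is to recover the range from the resulting coloring $\chi$ as a \emph{set}. The naive read-off ``$m\in\text{range}(g)$ iff $\chi$ exhibits a monochromatic adjacent pair in the $m$-chain'' is only $\Sigma^0_1$ in $\chi$, and indeed the disjoint chains admit a coloring recursive in $g$, from which $\rca$ cannot form the range. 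To force the issue I expect to \emph{couple} the chains so that the coloring is constrained to encode the increasing enumeration of $\text{range}(g)$ along a single finitely branching structure: because characteristic functions allow edges of unbounded (though finite) support, the coloring constraints form a finitely branching rather than a bounded system, so a global coloring cannot stall and must commit, at every level, to membership. This is precisely the passage from bounded trees ($\wkl$) to finitely branching trees (König's lemma, hence $\aca$), and ruling out ``cheating'' colorings that postpone the commitment is the delicate point I anticipate spending the most effort on.

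Once the coupling makes membership in $\text{range}(g)$ a $\Delta^0_1$ condition on $\chi$, recursive comprehension produces the range as a set, and Lemma III.1.3 of Simpson \cite{simpson} delivers $\aca$, completing both reversals and hence the equivalence.
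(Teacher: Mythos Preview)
Your forward implications are fine and match the paper exactly: convert via Theorem~\ref{rep4}, then apply Theorem~\ref{pcffA} using $\wkl$.

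The reversal, however, has a genuine gap. You correctly diagnose that your uncoupled chain construction only yields a $\Sigma^0_1$ read-off of the range, and you correctly identify the mechanism that must be exploited (characteristic-function edges can have finite but \emph{unbounded} support, pushing the compactness from bounded to finitely-branching K\"onig). But you never actually build the coupled hypergraph; ``I expect to couple the chains'' is not a construction, and this is precisely the step where the content lives. The paper's construction shows what such a coupling looks like: a single backbone $\{b_i\}$ with edges $\{b_n,b_{n+1}\}$ pins down $\chi(b_n)\equiv n\pmod 2$; auxiliary chains $\{v_{n,i}\}$ with edges $\{v_{n,i},v_{n,i+1}\}$ alternate in $i$; edges $\{b_{2t},v_{f(t),2t}\}$ force $\chi(v_{n,0})=1$ whenever $n=f(t)$; and the crucial edge $s_n=\{v_{n,0},b_{2n+1}\}\cup\{b_{2i}\mid f(i)=n\}$ has size two or three according to range membership, with characteristic function computable from $f$. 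When $n\notin\text{range}(f)$ this edge forces $\chi(v_{n,0})=0$, and when $n\in\text{range}(f)$ the extra vertex relaxes the constraint so that $\chi(v_{n,0})=1$ is consistent. The range is then literally $\{n\mid\chi(v_{n,0})=1\}$, a $\Delta^0_1$ set in $\chi$. Without something at this level of specificity your proposal does not constitute a proof.

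There is also an error in your treatment of $k>2$. You write that ``the extra $k-2$ colors can simply go unused,'' but you are not the one choosing the coloring: the principle hands you an arbitrary proper (or conflict-free) $k$-coloring, which may well use all $k$ colors, and then a chain of $2$-element edges no longer forces a binary alternation from which you can read off parity. The paper's fix is to adjoin a complete graph on $k-2$ fresh vertices, each joined by a $2$-element edge to every old vertex; this forces those $k-2$ vertices to absorb $k-2$ colors and leaves only two colors available on the original construction, restoring the $k=2$ analysis. Finally, note that the paper's edges $s_n$ have size up to three, so the passage from proper to conflict-free is justified by ``edges of size at most $3$,'' not ``at most $2$'' as in your sketch; this is harmless for the equivalence but worth getting right.
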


\begin{proof}
To prove that (\ref{conj21}) implies (\ref{conj22}), fix $k$, assume $\aca$, and suppose $H$ is a hypergraph with finite edges
given by a sequence of characteristic functions.  Further suppose that every finite partial hypergraph of $H$ can be properly $k$-colored.
By Theorem \ref{rep4}, we can find the sequence of edges for $H$.  By Theorem (\ref{pcffA}) and the fact that $\aca$ implies $\wkl$,
$H$ has a proper $k$-coloring.

The proof that (\ref{conj21}) implies (\ref{conj23}) is similar to the preceding argument, with proper replaced by conflict-free.

For the reversals, we begin by considering the proof that (\ref{conj22}) implies (\ref{conj21}) for $k=2$.  By Lemma III.1.3 of Simpson
\cite{simpson}, it suffices to use $\rca$ and (\ref{conj22}) to prove the existence of the range of an arbitrary injection $f$.  The vertices
of our hypergraph $H$ will be $\{ b_ i \mid i \in \nat \} \cup \{ v_{i,j} \mid i,j \in \nat \}$.  The edges of $H$ will be represented by
characteristic functions for the following sets.  For all $n$ and $i$, $p_{n,i} = \{ v_{n,i} , v_{n,i+1} \}$,
$q_n = \{ b_n , b_{n+1} \}$,
$r_n = \{ b_{2n} , v_{f(n), 2n} \}$, and
$s_n =\{ v_{n,0}, b_{2n+1} \} \cup \{ b_{2i} \mid f(i) = n\}$.
Because $f$ is an injection, $b_{2i}$ is in $s_n$ if and only if $\exists t< 2i (f(t) = n )$, so the characteristic function
for $s_n$ is uniformly computable from $f$.  Via dovetailing, recursive comprehension suffices to prove the existence
of a sequence of characteristic functions for all these edges of $H$.
Note that edges of the form $s_n$ contain two vertices if $n$ is not in the range of $f$ and three vertices if $n$ is in the range.
All other edges contain exactly two vertices.

We claim that every finite partial subhypergraph of $H$ has a proper 2-coloring.  Let $G$ be a partial subhypergraph.
Let $n_0$ be the maximum natural number such that $b_{2n_0}$ is in $G$. Define the coloring $\chi$ by
$\chi (b_n ) = n \mod 2$ if $n\le 2n_0 + 1$, $\chi (v_{n,j} )= j \mod 2$ if $\forall t \le n_0 (f(t) \neq n)$,
and $\chi (v_{n,j} ) = j+1 \mod 2$ if $\exists t \le n_0 ~(f(t) = n)$.  The first clause guarantees that $\chi$ is a proper
coloring with regard to every edge of the form $q_n$, and the last two clauses insure that $\chi$ is proper on edges of
the forms $p_{n,i}$, $r_n$ and $s_n$ restricted to the vertices of $G$.

Apply (\ref{conj22}) to obtain a proper 2-coloring of $H$.  If necessary, permute the colors so that $\chi( b_0 ) = 0$.
Fix $n$.  If $f(t) = n$ for some $t$, then $r_t = \{ b_{2t} , v_{n,2t} \}$ is in $H$, so $\chi (v_{n,0} ) = 1$.  If $n$ is not
in the range of $f$, then $s_n = \{v_{n,0}, b_{2n+1}\}$ is in $H$, so $\chi (v_{n,0} ) = 0$.  Thus
the range of $f$ is defined by $\{ n \mid \chi (v_{n,0} ) = 1\}$, which exists by recursive comprehension.

The proof that (\ref{conj22}) implies (\ref{conj21}) can be extended to values of $k$ greater than $2$ by adding
a complete graph on $k-2$ vertices to $H$ and connecting each vertex in the complete graph to every vertex in
the prior construction.  Finally, for graphs with edges of size at most 3, any coloring is conflict-free if and only if
it is proper.  Thus (\ref{conj23}) implies (\ref{conj21}) can be proved by replacing all uses of ``proper'' in the
preceding arguments with ``conflict-free.''
\end{proof}

For strong colorings, edge representation does not affect the strength of the coloring statements.
Additionally, in some settings, finite colorability suffices to imply $\wkl$ over $\rca$.  This provides an interesting
contrast to the case for graphs, as described following the proof of the next result.

\begin{thm}\label{sfcA}
$(\rca)$  The following are equivalent:
\begin{enumerate}
\item $\wkl$.\label{sfcA1}
\item Let $H$ be a hypergraph with any edge representation.  If there is a $k$ such that every finite
partial hypergraph of $H$ has a strong $k$-coloring, then $H$ has a strong $k$-coloring.\label{sfcA2}
\item Let $H$ be a hypergraph with a set of finite sets for edges.  If every finite partial hypergraph
of $H$ has a strong $3$-coloring, then $H$ has a strong $k$-coloring for some $k$.\label{sfcA3}
\item Let $H$ be a hypergraph with a sequence of finite sets for edges.  If every finite partial hypergraph
of $H$ has a strong $2$-coloring, then $H$ has a strong $k$-coloring for some $k$.\label{sfcA4}
\end{enumerate}
\end{thm}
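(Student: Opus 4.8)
The plan is to obtain the forward implications by compactness together with a bound on the number of colors, and to obtain the reversal by encoding a \wkl{}-instance as a graph whose colorings cannot exploit extra colors.

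First I would prove (\ref{sfcA1})$\to$(\ref{sfcA2}). The hypothesis supplies a single $k$ for which every finite partial hypergraph of $H$ is strongly $k$-colorable. For any edge $e$, the vertex set $e$ together with the single edge $e$ is a finite partial hypergraph, and a strong $k$-coloring is injective on $e$, so $|e|\le k$; thus every edge is finite of size at most $k$. Because the edges are uniformly bounded in size, \rca{} presents them as a sequence of codes whatever the original representation (cf.\ Theorem \ref{rep1}), so the clause ``any edge representation'' costs nothing, and I may run the tree argument of Theorem \ref{pcffA} verbatim with ``proper'' replaced by ``strong'': let $H_0,H_1,\dots$ enumerate the finite partial hypergraphs cut out by initial segments of the vertices and edges, let $T\subseteq k^{<\nat}$ consist of the $\sigma$ that strongly $k$-color each $H_n$ with $\lh(\sigma)\ge m_n$, note that $T$ is infinite by hypothesis, and apply \wkl{} to extract a path, that is, a strong $k$-coloring of $H$. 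Granting (\ref{sfcA2}), statements (\ref{sfcA3}) and (\ref{sfcA4}) follow at once, since their hypotheses are exactly (\ref{sfcA2}) instantiated at $k=3$ (with a set of edges) and at $k=2$ (with a sequence of edges), and the resulting coloring is a strong $k$-coloring for some $k$.

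Since the forward direction already yields (\ref{sfcA1})$\to$(\ref{sfcA2})$\to$(\ref{sfcA3}) and (\ref{sfcA1})$\to$(\ref{sfcA2})$\to$(\ref{sfcA4}), and since a sequence of edges cannot be converted to a set of edges without \aca{} (Theorem \ref{rep3}), it remains to establish the two reversals (\ref{sfcA3})$\to$(\ref{sfcA1}) and (\ref{sfcA4})$\to$(\ref{sfcA1}). The same edge-size observation shows that under the hypothesis of (\ref{sfcA4}) every edge has at most two vertices, so $H$ is a graph and ``strong'' means ``proper,'' while under the hypothesis of (\ref{sfcA3}) every edge has at most three vertices. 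I would base both reversals on the equivalence of \wkl{} with the statement that two injections $f,g$ with disjoint ranges admit a separating set (Simpson \cite{simpson}), building from such $f,g$ a bounded hypergraph whose finite partial hypergraphs are provably strongly $2$- (respectively $3$-) colorable and from which any finite strong coloring computes a set $X$ with $\mathrm{ran}(f)\subseteq X$ and $X\cap\mathrm{ran}(g)=\emptyset$.

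The step I expect to be the main obstacle is making this decoding survive the weak conclusion ``some $k$.'' The naive encoding---attach a base vertex $c$, join $c$ to $x_n$ when $n\in\mathrm{ran}(f)$, and join $c$ to $x_n$ through a length-two path when $n\in\mathrm{ran}(g)$---produces a forest, so every finite subgraph is $2$-colorable; but it is defeated by a single spare color, since assigning the third color to every $x_n$ is a proper $3$-coloring that decodes nothing. A correct construction therefore cannot merely try to force two colors to agree; it must fail to be finitely colorable at all unless the separation already exists. Two features seem forced: the graph must have unbounded degree (a computable graph of bounded degree is colored greedily with boundedly many colors), and surplus colors must be unable to escape the intended two-class structure. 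I would arrange the latter with complete bipartite gadgets $K_{m,m}$: in any proper coloring the colors appearing on the two sides of $K_{m,m}$ are disjoint, so large bicliques force the palette to split into two color classes, and by wiring the decision vertices through a common system of bicliques one forces this split to be globally consistent, whence reading off which class contains $x_n$ recovers the separating set exactly as in the two-color case. The remaining work---verifying in \rca{} that every finite subgraph of this graph is bipartite, and that a finite proper coloring yields the separating set---together with the adaptation to (\ref{sfcA3}), where the rigidity of three-element edges (which a strong coloring must color with three distinct colors) plays the role of the bicliques and the edges are presented as a set of finite sets, is where the delicate combinatorics lives.
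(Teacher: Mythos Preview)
Your forward direction is fine and matches the paper. The gap is in the reversals: you have correctly diagnosed that the ``naive'' tree-shaped encoding is killed by a spare color, but you have then set yourself a much harder problem than necessary. You are trying to engineer gadgets that force any strong $k$-coloring to behave like a $2$-coloring. The paper does not do this, and the key idea you are missing is that one never needs to control $k$ at all, because \emph{bounded $\Sigma^0_1$ comprehension} lets you harvest whatever finite palette the coloring happens to use.

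Concretely, for (\ref{sfcA4})$\to$(\ref{sfcA1}) the paper simply takes vertices $\{v_n : n\in\nat\}$ and, using a pairing $i\mapsto(i_0,i_1)$, defines the \emph{sequence} of edges $e_i=\{v_{f(i_0)},v_{g(i_1)}\}$. That is all: one bipartite graph between $f$-indexed and $g$-indexed vertices, no auxiliary gadgets. Every finite partial hypergraph is strongly $2$-colorable (color $v_{f(t)}$ for $t$ below the bound one way, everything else the other way). Now let $\chi$ be any strong $k$-coloring. The set $K=\{j<k:\exists t\,\chi(v_{f(t)})=j\}$ exists by bounded $\Sigma^0_1$ comprehension, and $S=\{n:\chi(v_n)\in K\}$ separates the ranges, since every $v_{f(m)}$ is adjacent to every $v_{g(n)}$ and hence their colors are distinct. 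Your biclique intuition is exactly right, but the single biclique \emph{is} the construction; the ``system of bicliques wired for global consistency'' is unnecessary once you realize the separating set is recovered from the finite set of colors, not from a two-valued coloring.

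For (\ref{sfcA3})$\to$(\ref{sfcA1}) the same trick works, but you have not identified why three-element edges are needed when the edges are a \emph{set} rather than a sequence. The set $\{\{v_a,v_b\}:\exists m\,f(m)=a\land\exists n\,g(n)=b\}$ is only $\Sigma^0_1$, so \rca{} cannot form it. The paper uses the third vertex to carry a witness bound: put $\{u_i,u_j,v_k\}$ in the edge set iff $i,j<k$, $\exists t<k\,f(t)=i$, and $\exists t<k\,g(t)=j$. Now membership is $\Delta^0_0$, the edge set exists, every finite partial hypergraph is strongly $3$-colorable, and the same bounded-$\Sigma^0_1$ harvest of the colors used on $\{u_{f(t)}\}$ yields the separator.
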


\begin{proof}
To prove that (\ref{sfcA1}) implies (\ref{sfcA2}), assume $\wkl$
and let $H$ be a hypergraph.  By Theorem \ref{rep1} and Theorem \ref{rep2},
we may assume that the edges of $H$ are given by a sequence of characteristic functions.
Fix $k$ and suppose that for every $n$, the partial subhypergraph given by the first $n$ values of each of the
the first $n$ characteristic functions has a strong $k$-coloring.  Let $T$ be the tree consisting of those finite
sequences $\sigma$ in $k^{<\nat}$ such that $\sigma$ is a strong $k$-coloring of the partial subhypergraph
defined by the first $\lh (\sigma )$ many values of the first $\lh (\sigma )$ many edge characteristic functions.
Because the finite partial hypergraphs of $H$ are colorable, $T$ must contain infinitely many sequences.
By $\wkl$, $T$ has an infinite path.  This path yields a strong $k$-coloring of $H$.

Note that (\ref{sfcA3}) follows immediately from from (\ref{sfcA2}) restricted to $k=3$.  To prove that (\ref{sfcA3}) implies
(\ref{sfcA1}), by Lemma IV.4.4 of Simpson \cite{simpson}, it suffices to use (\ref{sfcA3}) to separate the ranges
of injections with disjoint ranges.  Fix injections $f$ and $g$ such that $\forall m \forall n (f(m) \neq g(n))$ and
construct a hypergraph $H$ as follows.
The vertices of $H$ consist of the sets $U=\{u _i \mid i \in \nat \}$ and $V=\{v_i \mid i \in \nat\}$.
For each triple $i$, $j$, and $k$, the edge $\{ u_i , u_j , v_k \}$ is in $H$ if and only if $i<k$, $j<k$,
$\exists t<k ~f(t)=i$, and $\exists t<k ~g(t) = j$.  Note that each edge contains exactly three vertices and
exactly one vertex from $V$.  No edge contains two vertices from $U$ indexed by elements of the range of $f$, and
similarly, pairs of vertices from $U$ indexed by elements of the range of $g$ are forbidden within an edge.
Let $H_0$ be a finite partial hypergraph consisting of vertices $\{u _{i_0}, \dots u_{i_m} \}$ and
$\{v_{j_0} , \dots v_{j_n} \}$, and some or all of the edges of $H$ with vertices entirely contained in these sets.
The coloring that assigns color $0$ to all vertices $u_{i_k}$ such that
$\exists t<j_n ~ f(t) = i_k $, color $1$ to all vertices $u_{i_k}$ such that
$\exists t < j_n ~ g(t) = i_k$, and color $3$ to all remaining vertices is a strong $3$-coloring of $H_0$.
Thus, every finite partial hypergraph of $H$ has a strong $3$-coloring.  Apply (\ref{sfcA3}) to obtain a
$k$-coloring of $H$.  Note that if $\exists t ~f(t) = i$ and $\exists t ~g(t) = j$, then $u_{i}$ and $u_{j}$
are included in an edge, and so $u_{i}$ and $u_{j}$ must have distinct colors.  By bounded $\Sigma^0_1$
comprehension (which is provable in $\rca$ by Remark II.3.11 of Simpson \cite{simpson}) the set
$C = \{ i < k \mid \exists t ~\chi (u_{f(t)} ) = i \}$ exists.  $\rca$ proves that the set $\{ j \in \nat \mid \chi (u_j ) \in C \}$ exists,
contains the range of $f$, and is disjoint from the range of $g$.

Item (\ref{sfcA4}) also follows immediately from (\ref{sfcA2}).  As in the previous paragraph, to prove that
(\ref{sfcA4}) implies $\wkl$, let $f$ and $g$ be injections with disjoint ranges.  Construct $H$ as follows.
The vertices of $H$ are $V = \{ v_i \mid i \in \nat \}$.  Using a bijective pair encoding, for each $i \in \nat$,
let $i_0$ and $i_1$ denote the components of the pair encoded by $i$.  For each $i$, define the edge
$e_i = \{ v_{f(i_0 )} , v_{g(i_1 )} \}$.  Let $E_0$ be any finite set of edges with indices bounded by $b$, and
let $V_0$ be some set of vertices containing all the vertices in edges in $E_0$.  The coloring which assigns
color 0 to all vertices of the form $v_{f(t)}$ for $t<b$ and color 1 to all other vertices of $V_0$ is a strong
$2$-coloring of the finite partial hypergraph defined by $E_0$ and $V_0$.  By (\ref{sfcA4}), $H$ has a strong
$k$-coloring for some $k$;  call it $\chi$.  By bounded $\Sigma_1^0$ comprehension, provable in $\rca$ by Exercise II.3.13 of
Simpson \cite{simpson}, the set $K = \{ j<k \mid \exists t ~ \chi( v_{f(t) }) = j\}$ exists.  By recursive comprehension,
the set $S = \{ n \mid \chi (v_n ) \in K \}$ exists.  By the construction of $H$, $S$ contains all elements of the range
of $f$ and excludes all elements of the range of $g$.
\end{proof}

In the preceding theorem, if the number 3 is replaced by 2 in item (\ref{sfcA3}), the resulting statement asserts
that every locally $2$-colorable graph has a finite coloring.  This statement is known to be equivalent to $\wkl$
over $\rca$ plus $\Sigma^0_2$ induction, but the equivalence over $\rca$ is one of many open questions
listed in section \S5 of Dorais, Hirst, and Shafer \cite{dhs}.  Item (\ref{sfcA4}) shows
that for graphs with edges presented as a sequence rather than as a set, the statement that every locally $2$-colorable
graph has a finite coloring is equivalent to $\wkl$ over $\rca$.

\subsection*{Vertex colorings and infinite edges}

Allowing hypergraphs with infinite edges significantly affects the nature of vertex colorings.  Clearly, any
strong vertex coloring of a hypergraph with an infinite edge must use infinitely many colors.  Hypergraphs
with infinite edges may or may not have finite proper or conflict-free colorings.  We will show that sorting
those graphs with finite colorings from those without requires $\poo$.  The next definition and theorem assist
in that proof.

\begin{defn}
If $T$ is a tree, the set $L$ of {\sl leaves of }$T$ consists of those sequences in $T$ which have no extensions.
That is, $L = \{ \sigma \in T \mid \forall n ~\sigma \cat n \notin T\}$.
\end{defn}

\begin{lemma}\label{leaves}
$(\rca )$  The follow are equivalent.
\begin{enumerate}
\item $\poo$.\label{leaves1}
\item  If $\seq T$ is a sequence of trees in $\nat^{<\nat}$, then there is a function $f: \nat \to 2$ such that
$f(i) = 1$ if and only if $T_i$ contains an infinite path.\label{leaves2}
\item  If $\seq T$ is a sequence of trees and $\seq L$ is a sequence of sets such that for each $i$, $L_i$ is the set
of leaves of $T_i$, then there is a function $f: \nat \to 2$ such that
$f(i) = 1$ if and only if $T_i$ contains an infinite path.\label{leaves3}
\end{enumerate}
\end{lemma}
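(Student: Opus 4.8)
The plan is to establish the cycle $(\ref{leaves1}) \Rightarrow (\ref{leaves2}) \Rightarrow (\ref{leaves3}) \Rightarrow (\ref{leaves1})$, all over $\rca$. The implication $(\ref{leaves2}) \Rightarrow (\ref{leaves3})$ is immediate: statement $(\ref{leaves3})$ only adds the leaf sequence $\seq L$ to the hypotheses of $(\ref{leaves2})$, so every instance of $(\ref{leaves3})$ is an instance of $(\ref{leaves2})$ supplied with extra data, and $(\ref{leaves2})$ already delivers the function $f$. Hence the real content lies in the forward direction $(\ref{leaves1}) \Rightarrow (\ref{leaves2})$ and in the reversal $(\ref{leaves3}) \Rightarrow (\ref{leaves1})$.

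For $(\ref{leaves1}) \Rightarrow (\ref{leaves2})$, the key observation is that, with $\seq T$ as a parameter, the assertion that $T_i$ has an infinite path, namely $\exists g \, \forall n \,( g \restriction n \in T_i)$, is $\Sigma^1_1$ in the free number variable $i$, so its negation is $\Pi^1_1$. Applying $\poo$ to this $\Pi^1_1$ formula yields the set $A = \{ i \mid T_i \text{ has no infinite path}\}$; then $f$, the characteristic function of the complement of $A$, exists by $\rca$ and satisfies $f(i)=1$ iff $T_i$ has an infinite path.

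For the reversal $(\ref{leaves3}) \Rightarrow (\ref{leaves1})$, the first step is to recover $\aca$ from $(\ref{leaves3})$. By Lemma III.1.3 of Simpson \cite{simpson} it suffices to produce the range of an arbitrary injection $g$. I would let $T_i$ consist of the all-zero strings $0^m$ such that $g(j) \neq i$ for all $j < m$; the sequence $\seq T$ then exists by $\rca$. Each $T_i$ is infinite exactly when $i$ is not in the range of $g$, and otherwise its unique leaf is the string $0^m$ with $g(m) = i$, so the leaf sets satisfy $L_i = \{ 0^m \mid g(m) = i\}$ and the sequence $\seq L$ also exists by $\rca$. Feeding $\seq T$ and $\seq L$ into $(\ref{leaves3})$ produces $f$ with $f(i) = 1$ iff $i$ is not in the range of $g$, so the range of $g$ is $\{ i \mid f(i) = 0\}$ and exists by $\rca$; thus $\aca$ holds.

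Working now in $\aca$, to obtain full $\poo$ I would take an arbitrary $\Pi^1_1$ formula $\varphi(n)$ and express its negation, via the standard $\Sigma^1_1$ normal form (Simpson \cite{simpson}), as the statement that $T_n$ has an infinite path for a sequence $\seq T$ of trees obtained uniformly from the parameters; since the matrix is arithmetic, $\aca$ proves this sequence of trees exists as a sequence of sets. Because each leaf set $L_n = \{ \sigma \in T_n \mid \forall m \, \sigma \cat m \notin T_n \}$ is merely $\Pi^0_1$ in $T_n$, arithmetical comprehension also produces the sequence $\seq L$. Applying $(\ref{leaves3})$ then yields $f$, and $\{ n \mid \varphi(n)\} = \{ n \mid f(n) = 0\}$ exists by $\rca$, giving $\Pi^1_1$ comprehension. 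The main obstacle is exactly this point, namely showing that handing over the leaves does not weaken the principle. The resolution rests on two facts: leaf sets are only $\Pi^0_1$ in the trees, hence freely available once $\aca$ is in hand, and $\aca$ itself can be extracted from $(\ref{leaves3})$ using a tree construction (the all-zeros trees above) whose leaves are outright computable, so that the leaf hypothesis is satisfiable in the reversal.
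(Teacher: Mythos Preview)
Your argument is correct, but the reversal takes a different route from the paper's.  You prove $(\ref{leaves3}) \Rightarrow (\ref{leaves1})$ by a two-stage bootstrap: first extract $\aca$ from $(\ref{leaves3})$ using the all-zeros trees (whose leaf sets are computable from the injection $g$), and then, working in $\aca$, manufacture the leaf sets for an arbitrary tree sequence arithmetically and feed everything back into $(\ref{leaves3})$ to obtain $\poo$.  The paper instead proves $(\ref{leaves3}) \Rightarrow (\ref{leaves2})$ directly in $\rca$ by a single tree transformation: given $\seq T$, shift every entry up by one and adjoin a terminal $0$ to each node, producing trees $\hat T_i$ whose leaves are exactly the nodes ending in $0$, hence $\Delta^0_1$ in $T_i$; paths correspond bijectively under the shift, so the $f$ supplied by $(\ref{leaves3})$ for $\langle \hat T_i\rangle$ already works for $\seq T$.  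The paper's approach avoids the intermediate appeal to $\aca$ and gives a uniform computable reduction of instances of $(\ref{leaves2})$ to instances of $(\ref{leaves3})$, which is what makes the lemma useful for the one-step reversal in the next theorem (and, as the paper remarks, for potential Weihrauch or many-one analyses).  Your approach, on the other hand, makes transparent exactly why the extra leaf data cannot weaken the principle: leaf sets are only $\Pi^0_1$ in the trees, and the $\aca$ needed to recover them is already hidden inside $(\ref{leaves3})$.
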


\begin{proof}
We work in $\rca$.  The equivalence of (\ref{leaves1}) and (\ref{leaves2}) is Lemma VI.1.1 of Simpson \cite{simpson}.
Because (\ref{leaves3}) is a special case of (\ref{leaves2}), we need only prove that (\ref{leaves3}) implies (\ref{leaves2}).

Let $\seq T$ be a sequence of trees as in (\ref{leaves2}).  For any sequence $\sigma$ of positive integers, let
$\sigma_{-1}$ denote the sequence of the same length as $\sigma$ such that for all $i$, $\sigma_{-1}(i) = \sigma (i)-1$.
For each $i$, define the tree ${\hat T}_i$ by letting $\tau \in {\hat T}_i$ if and only if either $\tau_{-1} \in T_i$ or
$\tau = \sigma\cat 0$ and $\sigma_{-1} \in T_i$.  For each $i$, the leaf set of $\hat T_i$ consists precisely of those
sequences of the form $\sigma_{-1} \cat 0$ such that $\sigma \in T_i$.  $\rca$ can prove that $\seq{\hat T}$
and $\seq L$ exist.  Note that $p$ is an infinite path in $\hat T_i$ if and only if $p_{-1}$ is an infinite path in $T$.
Thus, the function $f$ satisfying (\ref{leaves3}) for $\seq{\hat T}$ also satisfies (\ref{leaves2}) for $\seq T$.
\end{proof}

The preceding result allows us to prove the reversal below in a single step, rather than using (\ref{p2}) to prove
$\aca$ and then deducing $\poo$ in a second step.

\begin{thm}
$(\rca )$  For each $k \ge 2$, the following are equivalent.
\begin{enumerate}
\item  $\poo$.\label{p1}
\item  If $\seq H$ is a sequence of hypergraphs, then there is a function $f: \nat \to 2$ such that
$f(i) = 1$ if and only if $H_i$ has a proper $k$-coloring.\label{p2}
\item  Statement $(\ref{p2})$ with ``proper'' replaced by ``conflict-free.''\label{p3}
\end{enumerate}
\end{thm}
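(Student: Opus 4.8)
The plan is to establish the equivalences by a forward implication from $\poo$ to each coloring statement, together with a single reversal using Lemma~\ref{leaves}. First I would prove that (\ref{p1}) implies (\ref{p2}). Given a sequence $\seq H$ of hypergraphs, the key observation is that for a fixed $k$, whether $H_i$ admits a proper $k$-coloring is governed by the colorability of its finite partial hypergraphs together with a compactness argument. Concretely, for each $i$ I would build, uniformly in $i$, a tree $T_i \subseteq k^{<\nat}$ whose nodes of length $m_n$ are exactly the proper $k$-colorings of the $n$-th finite partial hypergraph of $H_i$ (exactly as in the forward direction of Theorem~\ref{pcffA}). By König's lemma reasoning internal to $\wkl$ (which $\poo$ proves), $H_i$ has a proper $k$-coloring if and only if $T_i$ has an infinite path. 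Applying item (\ref{leaves2}) of Lemma~\ref{leaves} to the sequence $\seq T$ yields the desired $f$. The implication (\ref{p1}) implies (\ref{p3}) is identical, replacing ``proper'' by ``conflict-free'' throughout, since the forward direction of Theorem~\ref{pcffA} already handles conflict-free colorings by the same tree construction.

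For the reversal, I would show that either (\ref{p2}) or (\ref{p3}) implies (\ref{p1}) by invoking part (\ref{leaves3}) of Lemma~\ref{leaves}, which is the reason that lemma was stated with its leaf-set formulation. Given a sequence $\seq T$ of trees with a sequence $\seq L$ of their leaf sets, the plan is to construct, uniformly and within $\rca$, a sequence $\seq H$ of hypergraphs such that $H_i$ has a proper (respectively conflict-free) $k$-coloring if and only if $T_i$ has an infinite path. The natural encoding makes the vertices of $H_i$ correspond to the nodes of $T_i$, with edges designed so that a proper $k$-coloring of $H_i$ exists precisely when an infinite path is present. The role of the leaf set $L_i$ is to let $\rca$ recognize, in a bounded way, when a branch terminates; this is what keeps the construction of $\seq H$ recursive in the given data rather than requiring the jump. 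Applying (\ref{p2}) (or (\ref{p3})) produces the function $f$ with $f(i)=1$ iff $H_i$ is colorable iff $T_i$ has an infinite path, which is exactly statement (\ref{leaves3}), and hence $\poo$ follows in a single step.

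The main obstacle will be the encoding in the reversal: I must arrange the hypergraph $H_i$ so that colorability faithfully tracks the existence of an infinite path, while ensuring the entire sequence $\seq H$ is constructible in $\rca$ from $\seq T$ and $\seq L$ alone. The delicate point is that ``has an infinite path'' is a $\Sigma^1_1$ condition, so I cannot simply test paths directly; instead the geometry of the edges must force any $k$-coloring to propagate consistently down an infinite branch, with leaf-terminated branches creating no obstruction to coloring. Getting the edges right so that a finite tree (all branches terminating at leaves) yields a colorable $H_i$, while an ill-founded tree yields an uncolorable one, is where the real work lies. A clean way to do this for $k=2$ is to use a ``parity along the path'' gadget analogous to the $q_n$ and $r_n$ edges of Theorem~\ref{conj2}, chaining vertices along each branch so that a coloring encodes an infinite path; extension to $k \ge 2$ then follows by the same device used in Theorem~\ref{conj2}, adjoining a complete graph on $k-2$ extra vertices joined to every vertex of the construction. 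Finally, because edges in this reversal have size at most three, any coloring is conflict-free exactly when it is proper, so the argument for (\ref{p3}) implies (\ref{p1}) comes for free by substituting ``conflict-free'' for ``proper''.
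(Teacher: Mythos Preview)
Your proposal has a genuine gap rooted in the fact that this theorem lives in the section on \emph{infinite} edges, and both halves of your argument implicitly assume the edges are finite.

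For the forward direction, you build the tree $T_i \subseteq k^{<\nat}$ of proper colorings of finite partial hypergraphs and claim that $H_i$ has a proper $k$-coloring iff $T_i$ has an infinite path. This equivalence is false once $H_i$ has an infinite edge: no finite partial hypergraph contains an infinite edge at all (the edge must be a subset of the finite vertex set), so $T_i$ imposes no constraint coming from infinite edges, and a path through $T_i$ need not give a coloring that is proper on them. The paper instead simply observes that ``$H_i$ has a proper $k$-coloring'' is literally a $\Sigma^1_1$ formula and applies $\poo$ directly to the sequence; no compactness is involved.

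For the reversal, the same issue bites harder. You propose to use a gadget with edges of size at most three, noting that proper and conflict-free then coincide. But for hypergraphs with only finite edges, Theorem~\ref{pcffA} shows (over $\wkl$) that proper $k$-colorability is equivalent to the $\Pi^0_1$ statement ``every finite partial hypergraph is properly $k$-colorable.'' An arithmetical predicate cannot be equivalent to the $\Sigma^1_1$-complete ``$T_i$ has an infinite path,'' so no finite-edge construction can do the job. The paper's construction necessarily uses \emph{infinite} edges $E_0$ and $E_\sigma$ (one for the root and one for each non-leaf node, collecting all immediate successors), and it is arranged so that $H$ is colorable exactly when $T$ \emph{has} an infinite path---the opposite of what you wrote (``a finite tree \dots\ yields a colorable $H_i$''). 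The leaf sets are used to decide, computably, whether to attach the two-vertex edge $(\sigma_1,s)$ or the successor edge $E_\sigma$; that is where $\seq L$ enters. Your device of adjoining a $(k-2)$-clique to pass from $k=2$ to general $k$ is correct and matches the paper, and your instinct that conflict-free comes ``for free'' is right, but for a different reason: every conflict-free coloring is proper, and the paper's explicit coloring along a path happens to be conflict-free as well.
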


\begin{proof}
Assume $\rca$.  To prove that (\ref{p1}) implies (\ref{p2}), fix $k \ge 2$ and suppose
$\seq H$ is a sequence of hypergraphs.  Assuming that the vertices of $H$ are a subset of $\nat$,
the statement ``$H$ has a proper $k$-coloring'' asserts the existence of a function $g:\nat \to k$ such that
$g$ is not constant on any edge.  Thus, the set of indices $i$ such that $H_i$ has a proper $k$-coloring
is definable by a $\Sigma^1_1$ formula.  Applying $\poo$, the complement of this set of indices
exists, so by recursive comprehension, the set of indices and its characteristic function also exist.  This
function satisfies item (\ref{p2}) of the theorem.  A similar argument shows that (\ref{p1}) implies (\ref{p3}).

Next, we will prove that (\ref{p2}) implies (\ref{p1}) for the case $k=2$, indicating parenthetically how to
modify the argument for (\ref{p3}) implies (\ref{p1}).  By Lemma \ref{leaves}, it suffices to use (\ref{p2})
to determine which trees are well-founded in a list of trees with leaf sets.  Let $\seq{T}$ be a sequence of trees
in $\nat ^{<\nat}$, and let $\seq{L}$ be the corresponding leaf sets.  Given any tree
$T \subset \nat^{\nat}$ with leaf set $L$, define a hypergraph $H$ as follows.
The vertices of $H$ are $\{ a_0 , a_1, b_0, b_1 , s\}$ together with vertices labeled
$\sigma_0$ and $\sigma_1$ for each nonempty sequence $\sigma$ in $T$.  (One can routinely
assign integer codes to these vertices, and arrange for the set of codes to be $\nat$ or an initial segment of $\nat$,
if so desired.)
The edges of $H$ consist of
\begin{list}{$\bullet$}{}
\item  $(a_0, a_1 )$, $(a_1, s)$, $(b_0 ,b_1 )$, and $(b_1 , s)$,
\item  $(\sigma_0 , \sigma_1 )$ for every nonempty $\sigma \in T$,
\item  $(\sigma_1 , s)$ if $\sigma$ is a leaf of $T$,
\item $E_\sigma = \{ \sigma_1 \}\cup\{ \tau_0  \mid \tau\in T \land \exists n ~ \tau = \sigma\cat n\}$ if $\sigma\in T$ is not a leaf, and
\item $E_0 = \{a_0, b_0\} \cup \{ \sigma_0 \mid \sigma\in T \land \lh (\sigma ) = 1\}$.
\end{list}
Using $T$ and $L$ as parameters, $\rca$ can prove the existence of $H$ uniformly.
Note that $E_0$ and $E_\sigma$ may be infinite edges.

Suppose $c: \nat \to \{ \mred,~ \mblue \}$ is any $2$-coloring of the vertices of $H$.
Swapping colors if necessary, let $s$ be blue.  Assume $c$ is proper.  Then $a_1$ and $b_1$ are red
and $a_0$ and $b_0$ are blue.  Also, for any $\sigma \in T$ if $\sigma_0$ is red, then $\sigma_1$ is blue,
so $\{ \sigma_1 ,s \}$ is not an edge and $\sigma$ is not a leaf.  Because $\sigma_1$ is blue, by the definition
of $E_\sigma$, for some immediate successor $\tau$ of $\sigma$, $\tau_0$ must be red.  Finally,
to properly color $E_0$, for some $\sigma \in T$ of length 1, $\sigma_0$ is red.  Let $n$ be the least value such
that for $\tau = \sigma\cat n$, $\tau_0$ is red.  Iterating this process traces an infinite path in $T$.  Summarizing,
if $c$ properly $2$-colors $H$, then $T$ has an infinite path.  (Every conflict-free coloring is proper, so if $c$ is
conflict-free then $T$ has an infinite path.)

Conversely, suppose that $T$ has an infinite path $p$, and let $\sigma^0 \subset \sigma^1 \subset \sigma^2 \subset \dots$ be
the nonempty initial segments of $p$.  Define $c$ by $c(s)=c(a_0) = c(b_0) = \mblue$,
$c(a_1)=c(b_1)=\mred$, $\sigma^i_0 = \mred$ and $\sigma^1_1 = \mblue$ for $\sigma^i$ in $p$, and
$\tau_0 = \mblue$ and $\tau_1 = \mred$ for $\tau\in T$ not in $p$.  Treating the definitions of the edges of $H$ as
cases, one can verify that $c$ is a proper (conflict-free) $2$ coloring of $H$.  Summarizing the last two paragraphs,
$H$ has a proper (conflict-free) $2$-coloring if and only if $T$ has an infinite path.

Carrying out the the construction uniformly for all the trees in $\seq{T}$ and applying (\ref{p2})
of the theorem, we can find the characteristic function for the well-founded trees, as desired.
The parenthetical comments show that for $k=2$, (\ref{p3}) implies (\ref{p1}).

For values of $k>2$, modify the construction of the previous reversal by adding a complete graph
with $k-2$ vertices to $H$, connecting each vertex of the complete graph to every vertex of $H$ by
an edge consisting of two vertices.  The resulting hypergraph has a proper (conflict-free) $k$-coloring if and only
if $T$ is not well-founded.
\end{proof}

It would be interesting to know if the preceding result continues to hold if ``$k$-coloring'' is replaced by
``finite coloring.''
Lemma \ref{leaves} may prove useful in recasting the preceding theorem as a result related
to $\Sigma^1_1$ completness using many-one reducibility, or as a result on Weihrauch reducibility.

Infinite edges can interfere with finite conflict-free colorings of hypergraphs.  The following graph illustrates this situation.

\begin{defn}\label{Mdefn}
The $\cal M$-graph (the Matryoshka graph) is the hypergraph with vertex set $\nat$ and edges $\{ E_j \mid j\in \nat\}$
where $E_j = \{ k \mid j\le k\}$.
\end{defn}

Every finite partial subhypergraph of the $\cal M$-graph has a conflict-free 2-coloring.  For example, given any finite
collection of vertices, simply color the largest numbered vertex red and all other vertices blue.  On the other hand,
the entire $\cal M$-graph has no finite conflict-free coloring.  This can be proved directly by induction on $\Sigma^0_2$
formulas, or by using the following lemma.

\begin{lemma}$(\rca )$\label{ert}
The following are equivalent.
\begin{enumerate}
\item  $\sf{ERT}$ (eventually repeating tails):  Suppose $f : \nat \to k$ for some $k \in \nat$.\label{ert1}
Then there is a $b \in \nat $ such that  for all $ x \ge b $ there is a $y \ge b$ such that $ x \neq y$ and $f(x) = f(y)$.
\item  No finite coloring of the $\cal M$-graph is conflict-free.\label{ert2}
\end{enumerate}
\end{lemma}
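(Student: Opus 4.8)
The plan is to prove the equivalence by directly unwinding the definition of a conflict-free coloring on the $\cal M$-graph, since both statements are universal assertions about finite colorings of $\nat$. The key observation is that, for a \emph{single} coloring $f$, the failure of $f$ to be conflict-free on the $\cal M$-graph is \emph{literally} the conclusion of $\sf{ERT}$ for $f$. Because the edges are the tails $E_j = \{ k \mid j \le k \}$, a vertex $v \in E_j$ witnesses conflict-freeness of $E_j$ exactly when $f(v) \neq f(w)$ for every $w \ge j$ with $w \neq v$. Hence $f$ is not conflict-free precisely when there is a $j$ with the property that every $x \ge j$ admits some $y \ge j$, $y \neq x$, with $f(x) = f(y)$; taking $b = j$ turns this into the $\sf{ERT}$ conclusion, and conversely.

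First I would prove that (\ref{ert1}) implies (\ref{ert2}). Given an arbitrary finite coloring $\chi$ of the $\cal M$-graph, its range is bounded, so $\chi : \nat \to k$ for some $k$ and $\sf{ERT}$ applies, yielding a threshold $b$. I then claim the edge $E_b$ certifies that $\chi$ is not conflict-free: each $v \in E_b$ satisfies $v \ge b$, so $\sf{ERT}$ supplies a $y \ge b$ with $y \neq v$ and $\chi(y) = \chi(v)$, showing that no color occurs exactly once in $E_b$. As $\chi$ was arbitrary, no finite coloring of the $\cal M$-graph is conflict-free. For the converse, I would take an arbitrary $f : \nat \to k$ and view it as a finite coloring of the $\cal M$-graph, which is legitimate since the vertex set is $\nat$; by (\ref{ert2}) it fails to be conflict-free, so some edge $E_j$ has no vertex of unique color, and setting $b = j$ recovers the $\sf{ERT}$ conclusion for $f$. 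Both directions are carried out in $\rca$, as the witnesses $b$ and $j$ are merely transferred between the two formulations and no comprehension is used beyond applying the hypotheses.

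I do not anticipate a substantive obstacle, since after expanding the definition of conflict-free over the tail edges the two statements coincide. The only points requiring care are matching the quantifier blocks $\exists b \, \forall x \ge b \, \exists y \ge b$ of $\sf{ERT}$ against $\exists j \, \forall v \in E_j \, \exists w \in E_j$ coming from the negation of conflict-freeness, and confirming that ``finite coloring'' and ``$f : \nat \to k$ for some $k$'' denote the same class of maps on the vertex set $\nat$, so that a bound on the colors is available whenever it is needed.
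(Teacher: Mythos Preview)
Your proposal is correct and essentially identical to the paper's own proof: both directions simply unwind the definition of conflict-freeness on the tail edges $E_b$ and observe that the failure of $\chi$ on $E_b$ is verbatim the $\sf{ERT}$ conclusion with threshold $b$. The paper presents the same argument with slightly less commentary.
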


\begin{proof}
We will work in $\rca$.  To prove that (\ref{ert1}) implies (\ref{ert2}), suppose that $f: \nat \to k$ is a finite coloring
of the $\cal M$-graph.   Apply $\sf{ERT}$ to find $b$ such that for all $x\ge b$ there is a $y \ge b$ such that $y \neq x$
and $f(x) = f(y)$.  Then every color appearing in $E_b$ appears at least twice.  Thus $f$ is not conflict-free.

To prove the converse, let $f : \nat \to k$ be any function.  We can view $f$ as a $k$-coloring of the $\cal M$-graph.
By (\ref{ert2}), $f$ is not conflict-free.  Thus there is an edge $E_b$ such that every color appearing in $b$ appears at
least twice.  Thus $b$ witnesses that $\sf {ERT}$ holds.
\end{proof}

The principle $\sf{ERT}$ follows trivially from the principle $\sf{ECT}$ which asserts that if $f: \nat \to k$ then
there is a $b$ such that for all $x \ge b$ there are infinitely many values of $y$ such that $f(x) = f(y)$.  By
Theorem 6 of Hirst \cite{ind}, $\sf{ECT}$ is equivalent to $\Sigma_2^0$ induction.  Thus, ${\sf{I}} \Sigma^0_2$ suffices
to prove that no finite coloring of the $\cal M$-graph is conflict-free.

We will show that this use of induction is not necessary by proving $\sf{ERT}$ from $\srt$, the stable Ramsey
theorem for pairs and two colors.
By Corollary 2.6 of Chong, Slaman, and Yang \cite{csy}, $\srt$ cannot prove ${\sf{I}}\Sigma^0_2$, so neither can
$\sf{ERT}$.

\begin{thm}\label{srtert}
$(\rca)$ $\srt$ implies $\sf{ERT}$.
\end{thm}

\begin{proof}
Let $f : \nat \to k$.  Define $g:[\nat ]^2 \to 2 $ by $g(a,b) = 1$ if and only if for some $x \in [a,b)$,
$f(x)$ appears exactly once in the range of $f$ restricted to $[a,b)$.  Because the range of $f$ is $k$,
for fixed $a$ and increasing $x$, the value of $g(a,x)$ can only change at most $2k$ times.  Thus $g$ is a
stable coloring.

Apply $\srt$ to $g$ to obtain an infinite set $H= \{ x_0 , x_1 , x_2 \dots \}$ that is monochromatic for $g$.  Suppose,
by way of contradiction, that $g([H ]^2 ) \equiv 1$.  Let $H_0$ consist of the first $3 \cdot 2^{k-1} $ elements
of $H$.  The elements of $H_0$ define $3\cdot 2^{k-1} -1$ consecutive half-open intervals. Because
$g(x_0, x_{3\cdot 2^{k-1} } ) = 1$, some value of $f$ appears exactly once in $[x_0 , x_{3\cdot 2^{k-1}})$.  Call this
$k_0$.  Either on the left or the right of this location, there must be a collection of $3\cdot 2^{k-2}$ consecutive
elements of $H_0$.  Call these $H_1$.  Note that the range of $f$ on the intervals defined by $H_1$ must omit $k_0$.
Thus, the range of $f$ on these intervals contains at most $k-1$ elements.  Iterating this construction,
$H_{k-1}$ will consist of three consecutive elements of $H_0$ such that $f$ is constant on the union of the two associated
subintervals.  No value of $f$ appears exactly once in the range of $f$ on this union, contradicting the assumption
that $g$ applied to the endpoints yields $1$.
Thus $g([H]^2 ) \equiv 0$ and every value of $f$ appearing at or after $x_0$ must appear at least twice.
\end{proof}

It would be interesting to know how $\sf{ERT}$ compares in strength to other statements that are weaker
than ${\sf I}\Sigma^0_2$, for example, like those described by
Kreuzer and Yokoyama
\cite{ky}

\begin{bibsection}[Bibliography]
\begin{biblist}

\bib{berge}{book}{
   author={Berge, Claude},
   title={Hypergraphs},
   series={North-Holland Mathematical Library},
   volume={45},
   note={Combinatorics of finite sets;
   Translated from the French},
   publisher={North-Holland Publishing Co., Amsterdam},
   date={1989},
   pages={x+255},
   isbn={0-444-87489-5},
   review={\MR{1013569}},
}

\bib{csy}{article}{
   author={Chong, C. T.},
   author={Slaman, Theodore A.},
   author={Yang, Yue},
   title={The metamathematics of stable Ramsey's theorem for pairs},
   journal={J. Amer. Math. Soc.},
   volume={27},
   date={2014},
   number={3},
   pages={863--892},
   issn={0894-0347},
   review={\MR{3194495}},
}

\bib{dhs}{article}{
   author={Dorais, Fran\c{c}ois G.},
   author={Hirst, Jeffry L.},
   author={Shafer, Paul},
   title={Comparing the strength of diagonally nonrecursive functions in the
   absence of $\Sigma_2^0$ induction},
   journal={J. Symb. Log.},
   volume={80},
   date={2015},
   number={4},
   pages={1211--1235},
   issn={0022-4812},
   review={\MR{3436365}},
}

\bib{hirstmt}{article}{
   author={Hirst, Jeffry L.},
   title={Marriage theorems and reverse mathematics},
   conference={
      title={Logic and computation},
      address={Pittsburgh, PA},
      date={1987},
   },
   book={
      series={Contemp. Math.},
      volume={106},
      publisher={Amer. Math. Soc., Providence, RI},
   },
   date={1990},
   pages={181--196},
   review={\MR{1057822}},
   doi={10.1090/conm/106/1057822},
}

\bib{ind}{article}{
   author={Hirst, Jeffry L.},
   title={Disguising induction: proofs of the pigeonhole principle for
   trees},
   conference={
      title={Foundational adventures},
   },
   book={
      series={Tributes},
      volume={22},
      publisher={Coll. Publ., London},
   },
   date={2014},
   pages={113--123},
   review={\MR{3241956}},
}

\bib{ky}{article}{
   author={Kreuzer, Alexander P.},
   author={Yokoyama, Keita},
   title={On principles between $\Sigma_1$- and $\Sigma_2$-induction, and
   monotone enumerations},
   journal={J. Math. Log.},
   volume={16},
   date={2016},
   number={1},
   pages={1650004, 21},
   issn={0219-0613},
   review={\MR{3518781}},
}

\bib{simpson}{book}{
   author={Simpson, Stephen G.},
   title={Subsystems of second order arithmetic},
   series={Perspectives in Logic},
   edition={2},
   publisher={Cambridge University Press, Cambridge; Association for
   Symbolic Logic, Poughkeepsie, NY},
   date={2009},
   pages={xvi+444},
   isbn={978-0-521-88439-6},
   review={\MR{2517689}},
   doi={10.1017/CBO9780511581007},
}

\bib{smo}{article}{
   author={Smorodinsky, Shakhar},
   title={Conflict-free coloring and its applications},
   conference={
      title={Geometry---intuitive, discrete, and convex},
   },
   book={
      series={Bolyai Soc. Math. Stud.},
      volume={24},
      publisher={J\'anos Bolyai Math. Soc., Budapest},
   },
   date={2013},
   pages={331--389},
   review={\MR{3204564}},
   doi={10.1007/978-3-642-41498-5\_12},
}

\end{biblist}
\end{bibsection}

\end{document}